\newtheorem{theorem}{Theorem}
\newtheorem{lemma}{Lemma}
\newtheorem{corollary}{Corollary}
\theoremstyle{definition}
\theoremstyle{remark}
\newtheorem*{remark}{{\bf{Remark}}}
\newcommand{\N}{\mathbb N}
\newcommand{\Z}{\mathbb Z}
\begin{document}

\title{Hilbert cubes in
 progression-free sets and in the set of squares II}

\author{Rainer Dietmann}
\address{Department of Mathematics,
Royal Holloway, University of London, Egham, TW20 0EX Surrey, UK}
\email{Rainer.Dietmann@rhul.ac.uk}
\author{Christian Elsholtz}
\address{Institut f\"ur Analysis und Computational Number Theory,
Technische Universit\"at Graz,
Steyrergasse 30,
A-8010 Graz, Austria}
\email{elsholtz@math.tugraz.at}

\date{\today}
\begin{abstract}
Let $S_2$ be the set of integer squares.
We show that the dimension $d$ of a Hilbert cube 
$a_0+\{0,a_1\}+\cdots + \{0, a_d\}\subset S_2$ 
is bounded by $d=O(\log \log N)$.
\end{abstract}
\subjclass[2010]{primary: 11B75; secondary: 11B05, 11B25, 11P70}
\maketitle
Let
\begin{align*}
   H(a_0;a_1, \ldots , a_d) & :=a_0+\{0,a_1\}+\cdots + \{0, a_d\} \\
   & = \left\{ a_0 + \sum_{i=1}^d \epsilon_i a_i : \epsilon_i \in \{0,1\}
   \right\}
\end{align*}
denote 
a Hilbert cube of dimension $d$.
Brown, Erd\H{o}s and Freedman \cite{BrownandErdosandFreedman:1990}
asked whether the maximal
dimension of a Hilbert cube in the set of squares 
is absolutely bounded or not. Experimentally, one finds only very small cubes
such as 
\[
   1+\{0,840\}+\{0,840\}+\{0,528\}
   = \{1^2, 23^2, 29^2, 37^2, 29^2, 37^2, 41^2, 47^2\}.
\]
Observe that in this example
$29^2$ and $37^2$ occur as sums in two different ways.
Cilleruelo and Granville \cite{CillerueloandGranville:2007} and 
Solymosi \cite{Solymosi:2007} and Alon, Angel, Benjamini and Lubetzky
\cite{AlonandAngelandBenjaminiandLubetzky:2012}
explain that the Bombieri-Lang conjecture 
implies that $d$ is absolutely bounded.
Hegyv\'ari and S\'ark\"ozy
(\cite{HegyvariandSarkozy:1999}, Theorem 1) proved that for the set of 
integer squares
$S_2\cap [1, N]$ the maximal dimension is bounded by $d=O((\log N)^{1/3})$.
Diet\-mann and Elsholtz (\cite{DietmannandElsholtz:2012}, Theorem 3)
improved this to $d=O((\log \log N)^2)$. Here we further reduce that bound.

\begin{theorem}[Main theorem]{\label{thm:squares}}
Let $S_2$ denote the set of integer squares.
Let $N$ be sufficiently large, let 
$a_0$ be a non-negative integer and
let $A=\{a_1, \ldots, a_d\}$ be a set of distinct positive integers
such that 
$H(a_0; a_1, \ldots , a_d)\subseteq S_2\cap [1,N]$. Then
\[
  d \leq 7 \log \log N.
\]
\end{theorem}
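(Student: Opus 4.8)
\noindent\emph{Proof strategy.} I would combine an elementary squeezing inequality for the smallest generator with a self-improving reduction that plays the additive rigidity of the Hilbert cube off against the multiplicative rigidity of the squares.

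\emph{Set-up and squeezing.} Reorder so that $a_1<a_2<\cdots<a_d$ and let $q^2:=a_0+a_1+\cdots+a_d$ be the largest element of the cube; since it is a perfect square I may, after replacing $N$ by $q^2$, assume $N=q^2$. For each $\epsilon\in\{0,1\}^{d-1}$ the numbers $x:=a_0+\sum_{i\ge2}\epsilon_i a_i$ and $x+a_1$ both belong to the cube and hence are both squares, say $x=p^2$, $x+a_1=t^2$ with $t>p\ge1$; then $a_1=t^2-p^2\ge2p+1$, so $x\le(a_1-1)^2/4$. Thus the whole cube is contained in $[1,(a_1+1)^2/4]$, and comparing this with the top element $q^2$ gives $a_1\ge2q-1$. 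In particular $a_1\le a_i<a_1^2$ for every $i$: all $d$ generators are confined to the narrow multiplicative window $[a_1,a_1^2)$.

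\emph{Available rigidity.} The face $H(a_0;a_2,\dots,a_d)$ of the cube (coordinate $1$ set to $0$) and its $a_1$-translate (coordinate $1$ set to $1$) both lie in $S_2$, so $H(a_0;a_2,\dots,a_d)$ is trapped inside $T:=\{x\in S_2:x+a_1\in S_2\}$, a set of at most $\tau(a_1)=N^{o(1)}$ squares. A Hilbert cube of dimension $m$ with distinct generators $b_1<\cdots<b_m$ has at least $\tfrac12 m(m+1)+1$ distinct elements — the $m+1$ prefix sums $\sigma_k=b_1+\cdots+b_k$ together with, between consecutive $\sigma_{k-1}$ and $\sigma_k$, the $k-1$ sums $\sigma_{k-1}-b_i+b_k$ ($i<k$) — so already $\tfrac12 d(d-1)+1\le\tau(a_1)$, i.e.\ $d=N^{o(1)}$, and likewise each $a_i$ has $\gg d^2$ representations as a difference of two squares $\le N$. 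Finally, with the apex $q^2$ fixed, $q^2-\sum_{j\in J}a_j$ must be a square for \emph{every} $J\subseteq\{1,\dots,d\}$, which is already severe for $|J|=2$.

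\emph{The reduction, and the main obstacle.} The heart of the proof — and the step I expect to be genuinely hard — is to convert the preceding structure into a self-improving reduction: from a Hilbert cube of dimension $d$ in $S_2\cap[1,M]$ to a Hilbert cube of dimension at least $d-c$ in $S_2\cap[1,M']$ with $M'\le\sqrt M$ (or any $M'=M^{1-\delta}$ would do), where $c$ is an absolute constant. The natural inputs are the window $a_1\le a_i<a_1^2$, the bound $a_1\ge2\sqrt M-1$, the estimate $|T|\le\tau(a_1)$, and the scarcity of admissible triples $q^2,q^2-a_i,q^2-a_i-a_j\in S_2$. Granting such a step, one iterates from $M=N$: after $k$ stages the ambient bound has dropped to $N^{1/2^k}$ while at most $ck$ generators have been deleted, so as soon as $2^k$ exceeds $\log N$ the cube has collapsed, whence $d\le ck\le c\log_2\log N+O(1)$; balancing $c$ against the $\asymp\log_2\log N$ stages and being slightly wasteful at the end yields the claimed $d\le7\log\log N$. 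The obstacle is that the obvious attempts fail: deleting generators one at a time decreases $\sqrt M$ only by an additive constant, which gives merely $d=O(\sqrt N)$; and the divisor bound $\tau(a_i)=N^{o(1)}$ alone gives at best $d=O(\log N/\log\log N)$ — refined to $O((\log N)^{1/3})$ in \cite{HegyvariandSarkozy:1999} and to $O((\log\log N)^2)$ in \cite{DietmannandElsholtz:2012}. Pushing all the way to $\log\log N$ requires exploiting the tension between the additive structure of the cube and the multiplicative structure of the squares uniformly across all scales, so that each halving of $\log N$ costs only a bounded amount of dimension.
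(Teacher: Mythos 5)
Your proposal does not prove the theorem: the step you yourself flag as ``the heart of the proof'' --- the self-improving reduction from a cube of dimension $d$ in $S_2\cap[1,M]$ to one of dimension $d-O(1)$ in $S_2\cap[1,\sqrt M]$ --- is never constructed, and nothing in your set-up indicates how to construct it. The preliminary observations are correct but quantitatively far too weak to substitute for it: the squeeze $a_1\ge 2q-1$, the divisor bound $|T|\le\tau(a_1)$, and the count $|H|\ge\tfrac12 d(d-1)+1$ for a cube with distinct generators together cannot even recover the $d=O((\log N)^{1/3})$ bound of Hegyv\'ari and S\'ark\"ozy, because a merely quadratic lower bound on $|H|$ is exponentially lossy compared with what the structure actually forces. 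So this is a genuine gap, not a deferred verification.

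The two ingredients the argument actually needs are both absent from your sketch. First, Gyarmati's theorem (Lemma \ref{kati}): if $C+D\subseteq S_2$ with $C,D\subseteq\{1,\dots,N\}$, then $\min(|C|,|D|)\le 8\log N$; splitting the cube as $C=H(a_0;a_1,\dots,a_{\lfloor d/2\rfloor})$ and $D=H(0;a_{\lceil(d+1)/2\rceil},\dots,a_d)$ puts you exactly in this situation, with no iteration over scales required. Second --- and this is the decisive replacement for your quadratic count --- the Fermat--Euler theorem that there are no four squares in arithmetic progression forces a Hilbert cube inside $S_2$ to grow \emph{exponentially}: $|H_{i+1}|\ge c\,|H_i|$ at every step for any fixed $c<\tfrac43$, since a failure of this gives $|(H_i+a_{i+1})\cap H_i|>(2-c)|H_i|$ and hence, by a pigeonhole argument on the shift by $a_{i+1}$, a $4$-term progression of squares (Lemmas \ref{growth} and \ref{dense-progression}). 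Combining the two gives $c^{\lfloor d/2\rfloor}\le\min(|C|,|D|)\le 8\log N$, which is the whole proof. If you want to calibrate your framework: with only $|H|\gg d^2$ in hand, Gyarmati's theorem yields $d=O(\sqrt{\log N})$; the entire gain down to $O(\log\log N)$ comes from upgrading the polynomial lower bound on $|H|$ to an exponential one, not from any rescaling of $N$.
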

A comparable bound was proved in the author's earlier paper
(\cite{DietmannandElsholtz:2012}, Theorem 1)
for higher powers instead of squares.
\begin{remark}
The special case of subsetsums, i.e.~Hilbert cubes with
$a_0=0$, was previously  
studied by Csikv\'{a}ri (\cite{Csikvari:2008}, Corollary 2.5), who proved
in this case the same bound $d =O( \log \log N)$. His method of proof 
would not extend to the general case of $a_0 \neq 0$. 
\end{remark}
\begin{corollary}{\label{cor:quadraticpoly}}
Let $f(x)=ax^2+bx+c$ be a quadratic polynomial where $a,b,c \in \Z$
such that $a>0$, and let $S=\{f(x) : x \in \N\}$.
Let $a_0$ be a non-negative integer and $A=\{a_1, \ldots, a_d\}$ be a
set of distinct positive integers such that $H(a_0; a_1, \ldots, a_d) \subset
S \cap [1,N]$. Then for sufficiently large $N$, we have
\[
  d \le 7 \log \log N.
\]
\end{corollary}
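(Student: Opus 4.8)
The plan is to derive the Corollary as a purely formal consequence of the Main Theorem, using the classical trick of completing the square to convert the values of $f$ into honest integer squares. The key observation is that, since $a>0$,
\[
  4a\,f(x) + (b^2 - 4ac) = (2ax+b)^2 \in S_2 \qquad (x \in \N),
\]
so the affine map $\varphi\colon t \mapsto 4at + (b^2 - 4ac)$ carries $S$ into $S_2$. Because the multiplier $4a$ is a positive integer, $\varphi$ also sends Hilbert cubes to Hilbert cubes:
\[
  \varphi\bigl(H(a_0; a_1, \ldots, a_d)\bigr) = H\bigl(4aa_0 + b^2 - 4ac;\, 4aa_1, \ldots, 4aa_d\bigr),
\]
and $4aa_1, \ldots, 4aa_d$ are again distinct positive integers.

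Concretely, suppose $H(a_0; a_1, \ldots, a_d) \subseteq S \cap [1,N]$. Then its $\varphi$-image is a $d$-dimensional Hilbert cube contained in $S_2$; its largest element is $4a\,(\max H) + (b^2-4ac) \le 4aN + |b^2 - 4ac| =: N'$, and its smallest element is the base point $4aa_0 + b^2 - 4ac$, which, being a member of $S_2$, is automatically a non-negative integer. If that base point is positive, the image cube satisfies the hypotheses of the Main Theorem with $N$ replaced by $N'$. In the degenerate case where the base point equals $0$, I would instead apply the Main Theorem to the $(d-1)$-dimensional subcube $H(4aa_1;\, 4aa_2, \ldots, 4aa_d)$ obtained by fixing $\epsilon_1 = 1$, whose base point $4aa_1 \ge 1$ is positive and which is still contained in $S_2 \cap [1,N']$ (alternatively, one checks that the proof of the Main Theorem really only needs the cube to supply many distinct squares in $[1,N']$, so the single zero element may just be discarded and the hypothesis $a_0\ge 1$ is inessential).

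In either case the Main Theorem gives $d \le 7\log\log N' + 1$, and since $b^2$ and $4ac$ are fixed, $N' \le 4aN + |b^2-4ac| = O(N)$ with implied constant depending only on $f$; hence $\log\log N' = \log\log N + o(1)$ as $N \to \infty$. The only place where care is needed — and the step I expect to be the main (if mild) obstacle — is this final bookkeeping: the constant $7$ of the Main Theorem must absorb both the passage from $N$ to $N'$ and the possible loss of one dimension. Since the constant $7$ is evidently not sharp (the underlying argument yields a bound of the shape $(c+o(1))\log\log N$ with $c$ strictly below $7$), the quantity $(7-c)\log\log N \to \infty$ dominates the bounded error, and one concludes $d \le 7\log\log N$ for all sufficiently large $N$. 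No new number-theoretic input beyond the Main Theorem is required.
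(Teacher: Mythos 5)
Your proposal is correct and is essentially identical to the paper's own proof: complete the square via $4af(x)+b^2-4ac=(2ax+b)^2$, observe that the dilated and shifted cube is again a Hilbert cube inside $S_2\cap[1,4aN+b^2-4ac]$, and use the fact that the proof of the Main Theorem actually yields the constant $6.96$ so that the slack absorbs the passage from $N$ to $N'=O(N)$. Your extra care about a possibly zero base point and the loss of one dimension is harmless but unnecessary, since the Main Theorem already allows $a_0$ to be any non-negative integer.
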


\begin{theorem}{\label{thm:kprogression}}
Let $k\geq 3$ be a positive integer, and
let $S$ denote a set of integers without an arithmetic progression of 
length $k$. Moreover, let $c$ be a real number such that
\[
  1<c<\frac{k}{k-1}.
\]
Then for sufficiently large $N$, the following holds true: If 
$a_0$ is a non-negative integer and
$A=\{a_1, \ldots, a_d\}$ is a set of distinct positive integers
such that 
$H(a_0; a_1, \ldots , a_d)$ $\subseteq S\cap [1,N]$, then
\[ d\leq \frac{2(k-2)}{(k-1)\log c}\log N.\]
\end{theorem}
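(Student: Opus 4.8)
The plan is to argue by contradiction: supposing $d$ exceeds the stated bound, I will exhibit a $k$-term arithmetic progression inside $H$, contradicting $H\subseteq S$. Note first that the largest element of $H$ equals $a_0+a_1+\cdots+a_d\le N$, so $\sum_{i=1}^d a_i\le N$ and in particular every $a_i$ lies in $[1,N]$. Writing $H=a_0+\mathcal S$ with $\mathcal S=\bigl\{\sum_{i\in I}a_i:\ I\subseteq\{1,\dots,d\}\bigr\}\subseteq[0,N]$ the set of subset sums, it suffices to find $k$ elements of $\mathcal S$ in arithmetic progression. For $k=3$ this already gives a clean (in fact stronger) bound: if two distinct index sets $I\ne I'$ had equal sum, then $I\setminus I'$ and $I'\setminus I$ would be disjoint subsets of equal nonzero sum $t$, so $0,t,2t\in\mathcal S$, a $3$-term progression; hence if $S$ has no $3$-term progression all $2^d$ subset sums are distinct, forcing $2^d\le N+1$ and $d\le\log_2 N\le\log_c N$ (using $c<2$). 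So from now on I would assume $k\ge 4$.

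For $k\ge4$ the key is to exploit the hypothesis $c<k/(k-1)$. First I would pigeonhole: covering $[1,N]$ by the at most $\log_c N+1$ intervals $[c^j,c^{j+1})$, some such interval $[y,cy)$ contains a block of $r\ge d/(\log_c N+1)$ of the generators, say $b_1<\cdots<b_r$; since $d$ exceeds the claimed bound, $r$ exceeds the threshold $\tfrac{2(k-2)}{k-1}$ needed below, the gap between $\tfrac{2(k-2)}{k-1}$ and $2$ absorbing the harmless ``$+1$'' for $N$ large. The relevance of $c<k/(k-1)$ is the \emph{disjoint-layers} phenomenon: for $1\le s\le k-1$ one has $(s-1)c<s$, so the sum of any $s$ of the $b_i$ lies in $[sy,scy)\subseteq[sy,(s+1)y)$, and hence the sets of subset sums of sizes $0,1,\dots,k-1$ occupy pairwise disjoint, correctly ordered ranges. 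I would then convert this rigidity into $k$ subset sums in progression. Equivalently, it is enough to find $k-1$ pairwise disjoint pairs of index sets $(I_1,J_1),\dots,(I_{k-1},J_{k-1})$ (all $2(k-1)$ sets pairwise disjoint) with a common value $\sum_{J_\ell}a-\sum_{I_\ell}a=\Delta>0$: replacing $I_\ell$ by $J_\ell$ for exactly $j$ of the indices realises $B,B+\Delta,\dots,B+(k-1)\Delta\in\mathcal S$, a $k$-term progression, where $B=\sum_\ell\sum_{I_\ell}a$. The idea for producing such a configuration is that, were the subset sums of $\{b_1,\dots,b_r\}$ free of $k$-term progressions, the consecutive differences $b_{i+1}-b_i$ --- which are positive integers summing to less than $(c-1)y$ --- would be forced to avoid all the near-coincidences imposed by the layer structure; by a counting argument this becomes impossible once $r$ is large, because the layer structure confines the relevant ``ambient set'' to something controlled by $r$ rather than by the scale $y$. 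Optimising the bookkeeping here should give precisely the threshold $r\ge\tfrac{2(k-2)}{k-1}$, i.e.\ $d\le\tfrac{2(k-2)}{(k-1)\log c}\log N$.

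The step I expect to be the real obstacle is exactly this conversion of ``$r$ comparable generators'' into ``$k$-term progression'', carried out with no loss of factors depending on the scale $y$, which can be almost as large as $N$: a naive pigeonhole over subset sums lying in an interval of length proportional to $y$ would only yield $d=O(\sqrt N)$, or, via a sunflower-type extraction of disjoint equal-sum subsets, only $d=O((\log N)^2)$. Thus the disjoint-layer structure coming from $c<k/(k-1)$ must be used sharply to reduce the effective count from a quantity in terms of $y$ to a quantity polynomial in $r$, and pinning this down is what should give the precise constant $\tfrac{2(k-2)}{(k-1)\log c}$. Finally one must (routinely) check that the rounding in the pigeonhole and the finitely many small $N$ are covered by ``for sufficiently large $N$''.
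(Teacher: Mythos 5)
Your $k=3$ case is correct (it is the standard observation, recorded after Lemma \ref{growth} in the paper, that a cube inside a $3$-AP-free set has all $2^d$ subset sums distinct), but for $k\ge 4$ the proposal breaks down at exactly the point you flag as ``the real obstacle'': the claim that a block of $r$ generators $b_1<\cdots<b_r$ lying in a single interval $[y,cy)$ forces a $k$-term progression among its subset sums as soon as $r$ exceeds the constant $\tfrac{2(k-2)}{k-1}$. That constant is less than $2$, so you are asserting that \emph{two} such generators already suffice; but for $b_1<b_2<cb_1$ the subset sums $\{0,b_1,b_2,b_1+b_2\}$ contain no $3$-term progression unless $b_2=2b_1$, which is ruled out by $c<k/(k-1)\le 3/2$. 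More generally, no threshold for $r$ independent of $N$ can work: for any fixed $r$ one can pick $r$ integers in $[y,cy)$ (with $y$ large) avoiding the finitely many linear relations that would create even a $3$-term progression among their $2^r$ subset sums. So the deferred ``counting argument'' cannot exist in the form you describe, and the multiplicative-interval pigeonhole has no mechanism that would produce the exponent $(k-2)/(k-1)$ in the final bound.

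The paper's proof has a completely different, and much shorter, shape. Write the cube as a sumset $C+D$ with $C=H(a_0;a_1,\dots,a_{\lfloor d/2\rfloor})$ and $D=H(0;a_{\lceil (d+1)/2\rceil},\dots,a_d)$. Lemma \ref{growth} (proved via Lemma \ref{dense-progression} on dense intersections $B\cap(B+h)$) gives the lower bound $\min(|C|,|D|)\ge c^{\lfloor d/2\rfloor}$, while the theorem of Croot, Ruzsa and Schoen gives the upper bound $\min(|C|,|D|)\le 3N^{1-1/(k-1)}$ for any sumset $C+D$ contained in a $k$-AP-free subset of $[1,N]$; comparing the two bounds yields the theorem. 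To complete an argument along your lines you would essentially have to reprove that Croot--Ruzsa--Schoen input, which is where all of the work --- and the exponent $\tfrac{k-2}{k-1}$ --- actually lives.
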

This last theorem can be easily deduced from a recent paper of Schoen 
\cite{Schoen:2011}, but we state it for completeness. The two following lemmas
are also implicitly contained in Schoen \cite{Schoen:2011}, Lemma 2.1.

\begin{lemma}\label{growth}
Let $k\geq 3$ be a positive integer, and
let $S$ denote a set of integers without an arithmetic progression of 
length $k$. Moreover, let $c$ be a real number such that
\[
  1<c<\frac{k}{k-1}.
\]
Let $H=a_0+\{0,a_1\} + \cdots + \{0,a_d\}\subset S$. 
Then $|H|\geq 2c^{d-1}$.
\end{lemma}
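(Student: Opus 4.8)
The plan is to control how fast the sizes of the partial Hilbert cubes grow. For $0\le j\le d$ put $H_j:=a_0+\{0,a_1\}+\cdots+\{0,a_j\}$, so that $H_0=\{a_0\}$, $H_d=H$, and $H_j=H_{j-1}\cup(H_{j-1}+a_j)$. Therefore
\[
  |H_j| = 2|H_{j-1}| - |H_{j-1}\cap(H_{j-1}+a_j)|,
\]
and the whole argument reduces to the claim that the overlap satisfies $|H_{j-1}\cap(H_{j-1}+a_j)|\le \frac{k-2}{k-1}|H_{j-1}|$. Granting this, one gets $|H_j|\ge\frac{k}{k-1}|H_{j-1}|$ for every $j$; since $a_1$ is a positive integer we have $|H_1|=2$, so by induction $|H|=|H_d|\ge 2\left(\frac{k}{k-1}\right)^{d-1}\ge 2c^{d-1}$, the last step using $c<\frac{k}{k-1}$.

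It remains to bound the overlap. Observe that $x\in H_{j-1}\cap(H_{j-1}+a_j)$ exactly when both $x$ and $x-a_j$ lie in $H_{j-1}$, so $|H_{j-1}\cap(H_{j-1}+a_j)|$ equals the number of pairs $y,y+a_j$ with $\{y,y+a_j\}\subseteq H_{j-1}$. I would partition $\Z$ into residue classes modulo $a_j$ and, inside a fixed class $r+a_j\Z$, identify $r+a_jn$ with $n\in\Z$; then $H_{j-1}$ corresponds to a finite set $T_r\subseteq\Z$, and a pair $y,y+a_j$ inside $H_{j-1}$ corresponds to two consecutive integers in $T_r$. The key point is that $T_r$ cannot contain $k$ consecutive integers: a block of $k$ consecutive integers in $T_r$ would pull back to a $k$-term arithmetic progression of common difference $a_j$ inside $H_{j-1}\subseteq H\subseteq S$, contradicting the hypothesis on $S$. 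Hence $T_r$ is a disjoint union of maximal runs of consecutive integers, each of length $\ell$ with $1\le\ell\le k-1$; such a run contributes $\ell-1\le\frac{k-2}{k-1}\ell$ consecutive pairs, so $T_r$ contributes at most $\frac{k-2}{k-1}|T_r|$ of them. Summing over all residue classes $r$ yields $|H_{j-1}\cap(H_{j-1}+a_j)|\le\frac{k-2}{k-1}|H_{j-1}|$, as wanted.

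Combining the two parts finishes the proof. The only step that needs a moment of thought is the run-length estimate: one must note that no run can have length $k$ or more — this is precisely where the absence of $k$-term progressions in $S$ is used — and then check that the extremal configuration, in which every run has length exactly $k-1$, produces the constant $\frac{k-2}{k-1}$; the inequality $\ell-1\le\frac{k-2}{k-1}\ell$ is equivalent to $\ell\le k-1$, so the bound is tight in exactly the right way. Everything else is bookkeeping with the recursion, and it is worth remarking that the argument in fact gives the slightly sharper bound $|H|\ge 2\left(\frac{k}{k-1}\right)^{d-1}$, so that strictness of the inequality $c<\frac{k}{k-1}$ is not actually needed here.
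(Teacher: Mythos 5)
Your proof is correct and follows essentially the same route as the paper: the same chain of partial cubes $H_j$, the same identity $|H_j| = 2|H_{j-1}| - |H_{j-1}\cap(H_{j-1}+a_j)|$, and the same link between the size of that overlap and $k$-term progressions of common difference $a_j$ inside $S$. Your key estimate $|H_{j-1}\cap(H_{j-1}+a_j)|\le\frac{k-2}{k-1}|H_{j-1}|$ is precisely the contrapositive of the paper's Lemma \ref{dense-progression} with $\alpha=\frac{1}{k-1}$, proved by decomposing residue classes into runs rather than by the paper's shift-operator count, and running the induction forward instead of by contradiction lets you record the slightly sharper conclusion $|H|\ge 2\left(\frac{k}{k-1}\right)^{d-1}$.
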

It is well known that for sets
without progressions of length $k=3$ one even has that $|H|=2^d$. 
This is an exercise in Solymosi \cite{Solymosi:2007}, see also Lemma 3
in \cite{DietmannandElsholtz:2012}.

\begin{lemma}\label{dense-progression}
Let $0<\alpha<1$, let $h$ be an integer and let $B$ be a non-empty
set of distinct
integers. If $|B \cap (B+h)|> (1-\alpha) |B|$, 
then $B$ contains an arithmetic progression
of length $\lfloor \frac{1}{\alpha} \rfloor +1$ and difference $h$.
\end{lemma}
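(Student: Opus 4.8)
The plan is to partition $B$ into maximal arithmetic progressions of common difference $h$ and then pull out a long one by an averaging argument. Since the hypothesis $|B\cap(B+h)|>(1-\alpha)|B|$ only makes sense when $B$ is finite, I would first record that assumption. Call $b\in B$ a \emph{head} if $b-h\notin B$; starting from a head $b$ and repeatedly adding $h$ as long as one stays in $B$ produces a maximal block $b,b+h,\dots,b+(\ell_b-1)h\subseteq B$ with $b+\ell_b h\notin B$ and $\ell_b\geq 1$. Because $B$ is finite, walking backwards from any $b\in B$ in steps of $h$ must eventually exit $B$, so every element of $B$ lies in exactly one such block; in particular the block lengths sum to $|B|$.

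Next I would count the heads. An element $x$ lies in $B\cap(B+h)$ exactly when $x\in B$ and $x-h\in B$, i.e.\ exactly when $x\in B$ is \emph{not} a head, so the number $m$ of heads equals $|B|-|B\cap(B+h)|$, which by hypothesis is strictly less than $|B|-(1-\alpha)|B|=\alpha|B|$. Since the $m$ blocks have total length $|B|$, their average length $|B|/m$ exceeds $1/\alpha$, so at least one block has length $\ell_b>1/\alpha$; as $\ell_b$ is an integer this gives $\ell_b\geq\lfloor 1/\alpha\rfloor+1$. The first $\lfloor 1/\alpha\rfloor+1$ terms of that block then form the desired arithmetic progression inside $B$ with difference $h$.

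I do not anticipate a genuine obstacle: the argument is purely combinatorial. The only mild subtleties worth stating carefully are the termination of the backward walk (which is where finiteness of $B$ is used) and the elementary observation that any integer strictly larger than $1/\alpha$ is at least $\lfloor 1/\alpha\rfloor+1$, which holds regardless of whether $1/\alpha$ itself happens to be an integer.
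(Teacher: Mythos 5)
Your proof is correct and is essentially the paper's argument: both decompose $B$ into maximal runs $b, b+h, b+2h, \ldots$ inside $B$ and use the hypothesis to show that fewer than $\alpha|B|$ elements of $B$ can serve as an endpoint of such a run, which forces some run to have length exceeding $1/\alpha$ and hence at least $\lfloor 1/\alpha\rfloor+1$. The only cosmetic difference is bookkeeping --- you average the block lengths over the fewer than $\alpha|B|$ heads, while the paper takes a union bound over the classes $\{b\in B: r(b)=r\}$ for $r\le \lfloor 1/\alpha\rfloor$ --- and your explicit remarks about the finiteness of $B$ and about integers exceeding $1/\alpha$ are both correct.
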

\begin{proof}[Proof of Lemma \ref{dense-progression}]
Consider the shift operator $f:\Z \rightarrow \Z$ defined by $f(b)=b+h$
and its iterates. For given $b \in \Z$,
let $r(b)$ denote the least non-negative integer $r$ with 
\[
\{b, f(b), \ldots ,f^{r-1}(b)\} \subset  B, \text{ but }f^r(b)=b+rh\not\in B.\]
The  assumption $|\{b \in B:  b+h \in B\}| > (1-\alpha)|B|$
implies that for each fixed non-negative integer $r$, 
there are less than $\alpha |B|$ elements $b \in B$ with
this given value $r=r(b)$. Hence, for $k \in \N$
the number of elements $b\in B$
with $r(b) \leq k$ is less than 
$k \alpha |B| $.
If $k \alpha |B|<|B|$, then
there exists a $b \in B$ with 
$r(b)\geq k+1 \geq \lfloor \frac{1}{\alpha}\rfloor +1$. 
Therefore $B$ contains
the arithmetic progression $\{b, b+h ,\ldots, b+(r-1)h\}$
of length $r\geq \lfloor \frac{1}{\alpha}\rfloor +1$. 
\end{proof}
\begin{proof}[Proof of Lemma \ref{growth}]
Let
\[
  H_i=a_0+\{0,a_1\} + \cdots + \{0,a_i\}.
\]
Suppose that
$|H|< 2c^{d-1}$, then there is some $i \in \{1, \ldots , d-1\}$ 
such that
\[
  \frac{|H_{i+1}|}{|H_i|}<c.
\]
For this $i$ we have
\[
  |(H_i+a_{i+1})\cap H_i|> (2-c)|H_i|.
\]
Then by Lemma \ref{dense-progression} and our assumption on $c$ the set 
$H_i$ contains an arithmetic progression of length 
\[
  \lfloor \frac{1}{c-1}\rfloor +1 \ge k
\]
which is a contradiction, as $S$ does not contain a progression of length $k$.
\end{proof}

As in our previous work
\cite{DietmannandElsholtz:2012}
 we make use of the following two results on squares:

\begin{lemma}[Theorem 9 of Gyarmati \cite{Gyarmati:2001}]
\label{kati}
Let $S_2$ denote the set of integer squares.
For sufficiently large $N$ the following holds true: 
If $C, D \subseteq \{1, \ldots, N\}$ such that $C+D\subseteq S_2$, then
\[
  \min\{|C|, |D|\} \leq 8 \log N.
\]
\end{lemma}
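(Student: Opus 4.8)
\emph{Proof proposal.} The plan is to bound the smaller of the two sets; write $m := |C| \le |D| =: n$ after possibly interchanging $C$ and $D$, so that the goal becomes $m \le 8\log N$. First I would normalise: replacing $C$ by $C - \min C$ and $D$ by $D + \min C$ leaves $C + D$ unchanged and keeps all sets inside $[0,2N]$, so we may assume $0 \in C$. The element $0 \in C$ then forces $D \subseteq S_2$, so $D = \{x^2 : x \in X\}$ for a set $X$ of $n$ non-negative integers with $\max X \le \sqrt{2N}$, and for every $c \in C$ the translate $c + X^2$ lies in $S_2$.

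The engine of the argument is an elementary divisor-and-monotonicity observation. Fix $c \in C$ with $c > 0$; for $x \in X$ write $c + x^2 = y_x^2$ with $y_x > x$, so that $(y_x - x)(y_x + x) = c$. Then $y_x - x$ is a positive divisor of $c$, and since $y_x + x = \sqrt{c + x^2} + x$ is strictly increasing in $x$, the quantity $y_x - x = c/(y_x + x)$ is strictly decreasing in $x$. Hence $x \mapsto y_x - x$ is injective on $X$, which already gives $n = |X| \le \tau(c)$ (the number of divisors of $c$) for every non-zero $c \in C$, so $m \le n \le N^{o(1)}$ --- far short of what is wanted. Applying the same monotonicity to pairs $c < c'$ in $C$, the map $x \mapsto \sqrt{c' + x^2} - \sqrt{c + x^2}$ is again strictly decreasing from $X$ into the positive integers, so it takes a value $\ge n$ at $\min X$; squaring this and telescoping over consecutive elements of $C$ yields, by an easy induction, that the $k$-th smallest element of $C$ is at least $(k-1)^2 n^2$. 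Since $\max C < N$ this gives $(m-1) n \le \sqrt N$ and hence $m = O(N^{1/4})$.

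The remaining step --- pushing $N^{1/4}$ (or the divisor bound $N^{o(1)}$) down to $O(\log N)$, with an explicit constant --- is where the arithmetic of squares, rather than mere counting, must be used, and I expect it to be the main obstacle. The handle I would try to exploit is to use several elements of $C$ simultaneously: requiring $x^2 + c_1, \dots, x^2 + c_r$ all to be squares pins $x$ down to the integral points of a curve which, once $r \ge 3$, is a complete intersection of quadrics of genus $\ge 2$, and for such a curve a gap principle forces successive admissible values of $x$ to grow at least geometrically; a chain of $n$ such values, all at most $\sqrt{2N}$, then forces $n = O(\log N)$. A complementary structural fact that should help keep the constant under control is that $S_2$ has no four-term arithmetic progression, together with the observation that if $C$ contains a two-term progression and $D$ a three-term progression with a common difference $g$ (or vice versa) then $C + D$ already contains a four-term progression with difference $g$; hence no difference of $C$ can be a three-term-progression difference of $D$, and conversely. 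Combining the growth bound from the bootstrap with these arithmetic restrictions is, I believe, the route to the clean bound $\min\{|C|,|D|\} \le 8\log N$, and extracting the explicit constant $8$ is the delicate part.
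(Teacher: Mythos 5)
There is no proof of this lemma in the paper to compare against: it is imported verbatim as Theorem~9 of Gyarmati \cite{Gyarmati:2001} and used as a black box. So your argument has to stand on its own, and it does not. The elementary portion is correct: the normalisation making $0\in C$, the observation that $x\mapsto y_x-x$ runs through distinct positive divisors of $c$ (whence $n\le\tau(c)=N^{o(1)}$), and the telescoping bound $c_k\ge (k-1)^2n^2$ giving $m=O(N^{1/4})$ all check out. But both conclusions are qualitatively far from $O(\log N)$ --- the maximal order of the divisor function is $N^{(\log 2+o(1))/\log\log N}$, which is superpolylogarithmic --- and the step that would bridge the gap is, by your own admission, not an argument but a list of candidate ideas.

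Neither candidate closes the gap as stated. The genus-$\ge 2$ route would require a gap principle for the integral points of the system $y_i^2=x^2+c_i$ ($1\le i\le r$) that is \emph{uniform} in the coefficients $c_i$ and explicit enough to yield the constant $8$; Siegel/Faltings-type finiteness gives no such uniformity for free, for $r=2$ the curve is elliptic (so can have many integral points), and for $r\ge 3$ the ``geometric growth of successive admissible $x$'' is precisely the nontrivial theorem one would have to prove (it is the content of results on simultaneous Pell equations, not a formal consequence of the genus). The second idea is correct as far as it goes --- if $g$ is a difference of $C$ and a common difference of a three-term progression in $D$, then $C+D$ contains a four-term progression of squares, contradicting Fermat's theorem --- but forbidding a prescribed family of three-term progressions is compatible with $|D|=N^{1-o(1)}$ (Behrend), so this restriction cannot by itself force a logarithmic bound, and you give no mechanism for combining it with the $N^{1/4}$ bootstrap. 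In short: what you prove is correct but insufficient, and the part that would actually establish $\min\{|C|,|D|\}\le 8\log N$ is missing; for a complete argument you would need to consult Gyarmati's paper itself.
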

\begin{lemma} [Fermat, Euler
(see Volume II, page 440 of \cite{Dickson:1966})]{\label{Fermat}}
There are no four integer squares in arithmetic progression.
\end{lemma}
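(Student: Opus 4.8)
This is a classical theorem of Fermat, worked out in detail by Euler, and the natural plan is to prove it by Fermat's method of infinite descent. Assume for a contradiction that four integer squares $a^2<b^2<c^2<d^2$ lie in arithmetic progression with common difference $D>0$, and among all such configurations pick one with $d$ as small as possible; by minimality we may also assume $\gcd(a,b,c,d)=1$. Note $a\ge 1$, since $a=0$ would make $0,D,2D,3D$ squares and hence force $2$ to be a square. The relations $2b^2=a^2+c^2$ and $2c^2=b^2+d^2$ force $a\equiv c$ and $b\equiv d\pmod 2$, and then that $a,b,c,d$ are in fact all odd (otherwise all four would turn out even, contradicting coprimality). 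This reduction to a primitive, all-odd configuration is routine.

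The heart of the argument is a double use of unique factorisation in the Gaussian integers $\Z[i]$. Since $a+c$ is even, $1+i$ divides $a+ci$; writing $a+ci=(1+i)(\alpha+i\beta)$ gives $a=\alpha-\beta$, $c=\alpha+\beta$, $b^2=\alpha^2+\beta^2$, $D=2\alpha\beta$, and coprimality of $a,b,c,d$ forces $\gcd(\alpha,\beta)=1$, so that $(\alpha,\beta,b)$ is a primitive Pythagorean triple. Applying the same to $2c^2=b^2+d^2$ yields $b=\gamma-\delta$, $d=\gamma+\delta$, $c^2=\gamma^2+\delta^2$, $D=2\gamma\delta$, with $(\gamma,\delta,c)$ primitive Pythagorean. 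Comparing the two expressions for $D$ gives $\alpha\beta=\gamma\delta$, and the identities $c=\alpha+\beta$, $b=\gamma-\delta$ translate into the coupled system
\[
(\alpha+\beta)^2=\gamma^2+\delta^2,\qquad (\gamma-\delta)^2=\alpha^2+\beta^2 .
\]
Using $\alpha\beta=\gamma\delta$, write $\alpha=kt$, $\gamma=lt$, $\beta=ls$, $\delta=ks$ with $\gcd(k,l)=1$ (and then automatically $\gcd(s,t)=1$, since $\gcd(\alpha,\beta)=1$); substituting into either equation of the system collapses it to the single relation
\[
(k^2-l^2)(s^2-t^2)=2klst ,
\]
which exhibits $(kt,ls)$, and dually $(lt,ks)$, as the legs of a Pythagorean triangle.

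From here one runs the descent: a careful analysis of $(k^2-l^2)(s^2-t^2)=2klst$ — splitting the two sides into coprime factors using $\gcd(k^2-l^2,kl)=\gcd(s^2-t^2,st)=1$, invoking the primitivity of the two triples, and chasing parities — produces out of $(k,l,s,t)$ a new four-term arithmetic progression of squares with largest term strictly smaller than $d$, contradicting minimality. The main obstacle is precisely this last step: the algebraic identities above are mechanical, but guaranteeing that the descent output is genuinely a valid four-term progression and is strictly smaller requires careful bookkeeping of greatest common divisors, signs, and parities through the $\Z[i]$-factorisations. Since the statement is entirely classical, one may of course simply quote it from Dickson's \emph{History} (Volume II, page 440), as we do; the sketch above indicates how a self-contained descent proof would run.
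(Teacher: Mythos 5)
The paper offers no proof of this lemma at all: it is stated as a classical result of Fermat and Euler and simply referenced to Dickson's \emph{History}, Vol.~II, p.~440. Your closing fallback --- quoting Dickson --- is therefore exactly the paper's approach, and to that extent the proposal matches the paper. The descent sketch you prepend is a correct and standard set-up (the identities $2b^2=a^2+c^2$, $2c^2=b^2+d^2$, the parametrisations $a=\alpha-\beta$, $c=\alpha+\beta$, $b=\gamma-\delta$, $d=\gamma+\delta$, and the reduction to $(k^2-l^2)(s^2-t^2)=2klst$ all check out), but you should be clear that it is not a proof: the single step that carries all the difficulty --- extracting from $(k,l,s,t)$ a strictly smaller solution and verifying it is genuinely a nontrivial four-term progression of squares --- is asserted rather than carried out, and in the classical treatments that descent in fact typically routes through a different equation (e.g.\ $x^4-y^4=z^2$, or Fermat's right-triangle theorem) rather than returning directly to the original configuration, so even the shape of the claimed descent output would need justification. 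As a citation the proposal is fine; as a self-contained argument it has a genuine gap precisely where the theorem lives.
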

\begin{corollary}{\label{cor:fermatshift}}
Let $f(x)=ax^2+bx+c$ be a quadratic polynomial where $a,b,c \in \Z$
such that $a>0$, and let $S=\{f(x) : x \in \N\}$.
Then the set $S$ does not contain
 four integer squares in arithmetic progression.
\end{corollary}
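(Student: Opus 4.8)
The plan is to deduce this from the Fermat--Euler result, Lemma~\ref{Fermat}, by completing the square. The single ingredient is the identity
\[
  4a\,f(x) + (b^2 - 4ac) = (2ax+b)^2 ,
\]
valid for every $x \in \Z$; in particular the affine map $\varphi(y) = 4ay + (b^2-4ac)$ carries $S = \{f(x): x\in\N\}$ into the set $S_2$ of integer squares, since $\varphi(f(x)) = (2ax+b)^2$.

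The argument then runs as follows. Suppose $S$ contained four elements $s_1 < s_2 < s_3 < s_4$ forming an arithmetic progression (this is in particular the situation if $S$ contains four integer squares in arithmetic progression). Because $\varphi$ is affine with nonzero slope $4a$, the images $\varphi(s_1) < \varphi(s_2) < \varphi(s_3) < \varphi(s_4)$ again form an arithmetic progression, they are pairwise distinct since the $s_i$ are, and each of them is a perfect square by the displayed identity. This is a four-term arithmetic progression of distinct integer squares, contradicting Lemma~\ref{Fermat}. Hence $S$ contains no four-term arithmetic progression, and a fortiori no four integer squares in arithmetic progression.

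I do not anticipate a genuine obstacle here; the only matters needing a line of justification are that an affine map with nonzero slope sends a nonconstant arithmetic progression to a nonconstant arithmetic progression, and that ``arithmetic progression of length four'' in Lemma~\ref{Fermat} is understood in the nonconstant sense, i.e.\ as four distinct squares. Both are immediate. The point of the corollary is that it plays, for the value set of an arbitrary quadratic polynomial, exactly the role that Lemma~\ref{Fermat} plays for $S_2$, so that (together with Lemma~\ref{kati}, transferred in the same way via $\varphi$) the proof of the Main Theorem carries over and yields Corollary~\ref{cor:quadraticpoly}.
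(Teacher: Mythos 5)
Your proposal is correct and is essentially identical to the paper's own proof: both complete the square via $4af(x)+b^2-4ac=(2ax+b)^2$ and observe that this affine map sends a four-term arithmetic progression in $S$ to a four-term arithmetic progression of integer squares, contradicting Lemma~\ref{Fermat}. Like the paper, you in fact prove the stronger statement that $S$ contains no four-term arithmetic progression at all.
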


Let us remark that 
Setzer \cite{Setzer:1979} proved this corollary using elliptic curves.
Here we show that it is a simple consequence of Lemma
\ref{Fermat}, which can be proved with elliptic curves, but also
allows for an elementary proof.
\begin{proof}[Proof of Corollary \ref{cor:fermatshift}]
From $4af(x)+b^2-4ac=(2ax+b)^2$,
it follows that if the arithmetic four-progression
$P=\{n,n+m, n+2m, n+3m\}$ is contained in $S$, 
then the shifted four-progression
$\{4an+b^2-4ac, 4an+b^2-4ac+4am,4an+b^2-4ac+8am,4an+b^2-4ac+12am\}$
 will be in the set of squares, contradicting Lemma \ref{Fermat}.
\end{proof}
\begin{proof}[Proof of Theorem \ref{thm:squares}]
By Lemma \ref{Fermat}, Lemma \ref{growth} is applicable for $S=S_2$ with any
$c<\frac{4}{3}$.
Let
\[
  C=H(a_0; a_1, \ldots , a_{\lfloor d/2\rfloor})
\]
and
\[
  D= H(0; a_{\lceil (d+1)/2\rceil}, \ldots , a_d),
\]
then by Lemma \ref{growth} and Lemma \ref{kati} we obtain
\[c^{\lfloor d/2\rfloor }\leq \min( |C|, |D|)\leq 8 \log N.\]
Hence
\[ d \leq \frac{2}{\log c} \log \log N+O(1)
\leq  6.96\log \log N,\]
for sufficiently large $N$.
\end{proof}
\begin{proof}[Proof of Theorem \ref{thm:kprogression}]
The proof is as the one above, except that
instead of Lemma \ref{kati} we use the bound
\[
  \min( |C|, |D|)\leq 3N^{1-\frac{1}{k-1}},
\]
by Croot, Ruzsa and Schoen \cite{CrootandRuzsaandSchoen:2007}.
\end{proof}
\begin{proof}[Proof of Corollary \ref{cor:quadraticpoly}]
From $4af(x)=(2ax+b)^2-b^2+4ac$,
it follows that if
\[
  H(a_0; a_1, \ldots, a_d) \subset S \cap [1,N],
\]
then
\begin{align*}
  4a H(a_0; a_1, \ldots, a_d) + b^2-4ac & \subset S_2 \cap
  [4a+b^2-4ac, 4aN+b^2-4ac] \\
  & \subset S_2 \cap [1, 4aN+b^2-4ac].
\end{align*}
Moreover,
\[
  4a H(a_0; a_1, \ldots, a_d) + b^2-4ac 
  = H(4a a_0+b^2-4ac; 4a a_1, \ldots, 4a a_d).
\]
The Corollary now follows immediately from the proof of Theorem 
\ref{thm:squares}, and observing that 
\[d < 6.96\log \log (4aN+b^2-4ac) + O(1) \leq 7 \log \log N\]
for sufficiently large $N$.
\end{proof}

\begin{remark}
It is easy to rephrase our results for multisets where multiplicity
of elements $a_i$ is allowed: If there are no $k$-progressions in $S$,
then it is immediately seen that the maximum multiplicity can be $k-1$,
so after allowing for an extra factor $k-1$ our bounds established above
still hold true. In particular, Theorem \ref{thm:squares}
holds with $d \leq 21 \log \log N$.
\end{remark}

\begin{remark}
While the proof follows the general strategy outlined in 
\cite{DietmannandElsholtz:2012}, the paper by Schoen \cite{Schoen:2011}
(which independently studied related questions) 
inspired us to rephrase his Lemma 2.1 in the form of our 
Lemma \ref{growth}. This allows us to simplify and improve 
some of the steps described in
\cite{DietmannandElsholtz:2012} considerably.

With regard to the earlier results in \cite{DietmannandElsholtz:2012},
Noga Alon kindly pointed out to us that
Lemma 5 of \cite{DietmannandElsholtz:2012} 
is actually a version of a result of Erd\H{o}s and Rado 
\cite{ErdosandRado:1960} on $\Delta$-systems (or sunflowers).
Small quantitative
improvements here are due to Kostochka \cite{Kostochka:1996}, 
which together with the previous 
argument, for the set $S_2$ of squares, would lead to the tiny improvement
$d=O((\log_2 N)^2\frac{\log_5 N}{\log_4 N})$, 
the $\log_i N$ denoting the $i$-fold iterated logarithm.
Moreover, the Erd\H{o}s-Rado
conjecture on these $\Delta$-systems,
for which Erd\H{o}s \cite{Erdos:1981} offered a prize of $\$$1000,
would have implied $d=O(\log \log N)$.
Fortunately, Lemma \ref{growth} allowed us to bypass the realm
of $\Delta$-systems.
\end{remark}
 
\textbf{Acknowledgements:} We would like to thank Noga Alon for
 pointing out to
us the connection to the work of Erd\H{o}s and Rado
\cite{ErdosandRado:1960}.
In an attempt to avoid their
conjecture we got the idea of the present approach.

During the preparation of this paper, 
R.~Dietmann was supported by EPSRC Grant EP/I018824/1,
C. Elsholtz by FWF-DK Discrete Mathematics Project W1230-N13.

\end{document}